\newcounter{cnstcnt}
\providecommand{\keywords}[1]
{
  \small	
  \textbf{\textit{Keywords---}} #1
}
\title{ Boundedness in a chemotaxis system with weakly singular sensitivity in dimension two with arbitrary sub-quadratic degradation sources }
\author{ Minh Le \\
 Department of Mathematics\\
 Michigan State University\\
  Michigan, MI, 48823 \\
  \texttt{leminh2@msu.edu} }
\date{\today}
\begin{document}
\maketitle
\begin{abstract}  
We study the global existence and boundedness of solutions to a chemotaxis system with weakly singular sensitivity and sub-logistic sources in a two dimensional domain. X. Zhao (Nonlinearity; 2023; 36; 3909-3938 ) showed that the logistic degradation, $-\mu u^2$, can prevent blow-up under the largeness assumption on $\mu$. In this paper, we improve the result by replacing the quadratic degradation by sub-logistic one, $-\frac{\mu u^2}{\ln^\beta(u+e)}$ with $\beta \in (0,1)$, and removing the largeness assumption on $\mu$. 

\end{abstract}

\keywords{Chemotaxis, partial differential equations, logistic sources, singular sensitivity }
\numberwithin{equation}{section}
\newtheorem{theorem}{Theorem}[section]
\newtheorem{lemma}[theorem]{Lemma}
\newtheorem{remark}{Remark}[section]
\newtheorem{Prop}{Proposition}[section]
\newtheorem{Def}{Definition}[section]
\newtheorem{Corollary}{Corollary}[theorem]
\allowdisplaybreaks

\section{Introduction}
We consider the following chemotaxis system in a smooth bounded domain $\Omega \subset \mathbb{R}^2$:
 \begin{equation} \label{1}
      \begin{cases}
          u_t = \Delta u - \nabla \cdot (u \chi(v) \nabla v) +f(u),  \qquad &\text{in } \Omega \times (0,T_{\rm max}) \\
          0 =  \Delta v - v +u, \qquad &\text{in } \Omega \times (0,T_{\rm max})  \\
          u(x,0) =u_0(x ),  \qquad &\text{in } \Omega \times (0,T_{\rm max})  \\
           \frac{\partial u}{\partial \nu } = \frac{\partial v}{ \partial \nu} =  0 \qquad &\text{on } \partial \Omega \times (0,T_{\rm max}) 
      \end{cases}
  \end{equation}
  where $r \in \mathbb{R}$, $\mu>0$, $\chi \in C^1((0,\infty))$, $f \in C([0, \infty))$, $T_{\rm max} \in (0, \infty]$ is the maximal existence time, and 
  \begin{align} \label{initial}
      u_0 \in C^0 (\bar{\Omega}) \text{ is nonnegative with } \int_\Omega u_0 >0.
   \end{align}
The system of equation \eqref{1} characterizes the motion of cells influenced by chemical cues. In this context, the functions $u(x,t)$ and $v(x,t)$ denote the cell density and chemical concentration respectively, at position $x$ and time $t$. When $\chi(v) \equiv \chi$ and $f \equiv 0$, this system is commonly referred to as the Keller-Segel system \cite{Keller}. Interestingly, the two-dimensional version of this system showcases a noteworthy characteristic referred to as the critical mass phenomenon. It stipulates that if the initial mass is below a certain threshold, solutions remain globally bounded \cite{Dolbeault, Dolbeault1, NSY}. Conversely, if the initial mass surpasses this threshold, solutions undergo finite-time blow-up \cite{Nagai1, Nagai2, Nagai4, Nagai3}.\\
To avert blow-up phenomena, logistic sources are introduced, where $f(u) = ru -\mu u^2$. This inclusion ensures the global boundedness of solutions across spatial dimensions \cite{Tello+Winkler, Winkler-logistic}. Notably, within two-dimensional domains, employing sub-logistic sources such as $f(u)= ru -\frac{\mu u^2}{\ln^\beta(u+e)}$ with $\beta \in (0,1]$ effectively prevents blow-up scenarios across various chemotaxis models \cite{Tian4, Tian2, Minh1, Minh2, Minh3, Dai+Xiang}. \\
In cases of singular sensitivity where $\chi(v) = \frac{\chi}{v^\alpha}$ with $\alpha >0$ and $f \equiv 0$, solutions are established to be globally bounded provided $\chi>0$ is sufficiently large in relation to $n$ and $\alpha$ \cite{Tobias, Fujie+Senba, KMT}. For the two-dimensional scenario with logistic sources, $f(u)= ru -\mu u^2$, the global existence of solutions has been established in \cite{FWY} when $\alpha=1$ for any $\mu >0$, and furthermore , if $r$ exceeds a certain threshold, solutions remain globally bounded in time. In the case where $\alpha \in (0,1)$, \cite{Zhao2023} has demonstrated that solutions are globally bounded given sufficiently large $\mu$. However, our paper presents findings indicating that the largeness assumption of $\mu$ can be relaxed. Our main results read as follows:
\begin{theorem} \label{thm1}
    Assume that $\chi(v)= \frac{1}{v^\alpha}$ with $\alpha \in (0,1)$, and $f(u)= ru -\frac{\mu u^2}{\ln^\beta(u+e)}$ with $\mu>0 $, $r\in \mathbb{R}$, and $\beta \in (0,1)$, then the
    system \eqref{1} admits a global classical solution $(u,v)$ such that 
    \begin{align}
        u &\in C^0 \left ( \bar{\Omega} \times [0, \infty) \right ) \cap C^{2,1} \left ( \bar{\Omega} \times (0,\infty) \right ) \notag \\
        v & \in C^{2,0} \left ( \bar{\Omega} \times [0, \infty ) \right ),
    \end{align}
    and such that both $u$ and $v$ are positive in $\bar{\Omega} \times (0, \infty)$. Moreover, the solution remains bounded in the sense that 
    \begin{align}
        \sup_{t>0} \left \| u(\cdot,t)  \right \|_{L^\infty(\Omega)} <\infty.
    \end{align}
\end{theorem}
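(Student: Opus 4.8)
The plan is to reduce the theorem to a uniform-in-time a priori bound for $\|u(\cdot,t)\|_{L^\infty(\Omega)}$ and then to establish that bound through an $L\log L$ entropy estimate whose dissipation is strong enough to absorb the singular cross terms for every $\mu>0$. First I would invoke the standard local well-posedness and extensibility theory: a fixed-point argument produces a unique classical solution on a maximal interval $[0,T_{\max})$, the parabolic and elliptic maximum principles give $u>0$ and $v>0$ for $t>0$, and the extensibility criterion reduces global existence and boundedness to showing $\sup_{t<T_{\max}}\|u(\cdot,t)\|_{L^\infty}<\infty$. Integrating the first equation over $\Omega$ and using the identity $f(u)=ru-\frac{\mu u^2}{\ln^\beta(u+e)}$ together with Jensen's inequality applied to the (essentially convex) super-linear loss term yields a differential inequality for $m(t):=\int_\Omega u$ in which the super-linear damping dominates, hence $\sup_{t}m(t)<\infty$; throughout, $v$ is recovered from $u$ via the elliptic resolvent $v=(I-\Delta)^{-1}u$ with positive Green's function, so that $v(\cdot,t)\ge c\,m(t)$ pointwise.

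The core of the argument is the entropy functional $\int_\Omega u\ln(u+e)$. Differentiating and integrating by parts, using $\Delta v=v-u$ to rewrite the taxis contribution, I expect to arrive at an identity of the schematic form
\[
\frac{d}{dt}\int_\Omega u\ln(u+e)+\int_\Omega\frac{|\nabla u|^2}{u}
= -\int_\Omega u\,v^{1-\alpha}+\int_\Omega u^2 v^{-\alpha}+\alpha\int_\Omega u\,v^{-\alpha-1}|\nabla v|^2+\int_\Omega(\ln(u+e)+1)f(u),
\]
in which the degradation supplies the dissipative quantity $\mu\int_\Omega u^2\ln^{1-\beta}(u+e)$ (up to lower-order terms), while the genuinely dangerous contributions are the weakly singular taxis terms $\int_\Omega u^2 v^{-\alpha}$ and $\int_\Omega u\,v^{-\alpha-1}|\nabla v|^2$. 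The whole estimate closes once these two terms are absorbed into $\tfrac12\int_\Omega\frac{|\nabla u|^2}{u}+\tfrac\mu2\int_\Omega u^2\ln^{1-\beta}(u+e)$ up to a time-independent constant.

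Proving that absorption is the main obstacle, and it is exactly where the hypotheses $\alpha\in(0,1)$ and $\beta\in(0,1)$ enter and where the largeness of $\mu$ becomes unnecessary. My plan is to control $\nabla v$ and the negative powers of $v$ through elliptic regularity ($\|\nabla v\|_{L^q}\lesssim\|u\|_{L^p}$ for suitable exponents, e.g. $\|\nabla v\|_{L^4}\lesssim\|u\|_{L^{4/3}}$ in dimension two) combined with Hölder and two-dimensional Gagliardo–Nirenberg interpolation, so as to bound each singular term by $\int_\Omega u^2$ times a constant depending only on the mass. The decisive point is then that the dissipation is super-quadratic: since $\ln^{1-\beta}(u+e)\to\infty$ as $u\to\infty$ precisely because $\beta<1$, for every $\mu>0$ and every $\varepsilon>0$ one has the pointwise inequality $C u^2\le\varepsilon\mu u^2\ln^{1-\beta}(u+e)+C_\varepsilon$, so the quadratic remainder is swallowed by $\mu\int_\Omega u^2\ln^{1-\beta}(u+e)$ regardless of the size of $\mu$. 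The serious technical difficulty is that for general $r\in\mathbb{R}$ no uniform-in-time positive lower bound for $v$ is available (when $r\le0$ the mass, and hence $v$, may decay), so the factor $v^{-\alpha}$ cannot simply be replaced by a constant; the weak singularity $\alpha<1$ must instead be absorbed by exploiting, through the elliptic equation, the correlation between largeness of $u$ and non-smallness of $v$ — quantitatively, the logarithmic strength of the singularity stays below the $\ln^{1-\beta}$ gain because $\alpha+1-\beta>0$. Once this functional inequality is secured, the resulting differential inequality $\frac{d}{dt}y+c\,y\le C$ with $y=\int_\Omega u\ln(u+e)$ yields a uniform $L\log L$ bound, and integration in time gives $\int_t^{t+1}\int_\Omega u^2\le C$ uniformly.

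Finally I would bootstrap to $L^\infty$. From the uniform $L\log L$ and space-time $L^2$ bounds, elliptic regularity upgrades the integrability of $\nabla v$, and testing the $u$-equation with $u^{p-1}$ together with Gagliardo–Nirenberg and a uniform-in-time Grönwall lemma — fed by the space-time $L^2$ control rather than by the degradation, which is too weak to absorb the $L^p$-level nonlinearity directly — produces uniform bounds on $\|u(\cdot,t)\|_{L^p}$ for every finite $p$. A standard Moser–Alikakos iteration, or a Neumann heat-semigroup argument, then promotes these to a uniform $L^\infty$ bound, whence $T_{\max}=\infty$ and $\sup_{t>0}\|u(\cdot,t)\|_{L^\infty(\Omega)}<\infty$, completing the proof.
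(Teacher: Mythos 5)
Your overall architecture (local existence $\to$ entropy/$L\ln L$ estimate $\to$ $L^p$ $\to$ Moser iteration) matches the paper, and you correctly isolate the one genuinely new idea: since $\beta<1$, the pointwise inequality $Cu^2\le\varepsilon\mu u^2\ln^{1-\beta}(u+e)+C_\varepsilon$ lets the sub-logistic dissipation swallow any quadratic remainder for \emph{every} $\mu>0$, which is exactly how the paper (inequality \eqref{M}) removes the largeness assumption. However, there is a genuine gap at the step you yourself flag as ``the serious technical difficulty'': reducing the weakly singular taxis terms to $C\int_\Omega u^2$ when no uniform positive lower bound for $v$ is available. Your proposed mechanism --- that ``the logarithmic strength of the singularity stays below the $\ln^{1-\beta}$ gain because $\alpha+1-\beta>0$'' --- is not an argument; $\alpha$ is a power of $v$, not a logarithmic weight, and that inequality is vacuous. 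The paper closes this step with concrete machinery inherited from Zhao: the identity $\int_\Omega |\nabla v|^2/v^2\le|\Omega|$, which handles $\alpha<\tfrac12$ via the interpolation \eqref{l1.4}--\eqref{l1.5}, and for $\alpha\in[\tfrac12,1)$ the iteration $\alpha_j=2^j\alpha+1-2^j$ (Lemma \ref{l2}), which successively reduces the exponent of the singularity until it falls below $\tfrac12$. Without something of this kind your entropy inequality does not close, because for $r\le 0$ the mass (hence $\inf_\Omega v$) may decay and $v^{-\alpha}$ is genuinely unbounded. Relatedly, your extensibility criterion should also exclude $\liminf_{t\to T_{\max}}\inf_\Omega v=0$, not only $L^\infty$ blow-up of $u$.

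A secondary divergence at the $L^p$ level: you propose to feed a uniform Gr\"onwall lemma with the space-time $L^2$ bound. The paper instead proves a logarithmically weighted Gagliardo--Nirenberg inequality (Lemma \ref{C52.LGN}) and uses the uniform $L\ln L$ bound to absorb $c_2\int_\Omega u^{p+1}$ into $\frac{p-1}{p^2}\int_\Omega|\nabla u^{p/2}|^2$; the point is that plain Gagliardo--Nirenberg would produce the coefficient $C\int_\Omega u$, which is bounded but cannot be made small, whereas the gap between $\ln^0$ and $\ln^1$ buys an arbitrarily small $\epsilon$ in front of $\left(\int_\Omega u\ln(u+e)\right)\int_\Omega|\nabla u^{p/2}|^2$. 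Your alternative is plausible in principle but unverified, and the term $\int_\Omega u^p v^{-2\alpha}|\nabla v|^2$ again requires the singular-sensitivity machinery (the paper cites Zhao's Lemmas 4.2--4.3) before any Gr\"onwall argument can start.
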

We adhere closely to the framework outlined in \cite{Zhao2023} for our proof. However, we strategically leverage the sub-quadratic degradation terms to derive an $L\ln L$ estimate for solutions. Subsequently, employing a modified Gagliardo–Nirenberg interpolation inequality, we obtain an $L^p$ bound for solutions. Through this approach, we circumvent the need for the largeness assumption regarding the parameter $\mu$.

\section{A priori estimates and proof of theorem \ref{thm1}}
The local existence of solutions to the system \eqref{1} in the classical sense can be established by slightly modifying the arguments in \cite{FWY}[Lemma 2.2].
\begin{lemma} \label{lc}
    The system \eqref{1} under the assumptions of Theorem \ref{thm1} admits a unique classical solution $(u,v) \in C^0 \left ( \bar{\Omega} \times [0, T_{\rm max}) \right ) \cap C^{2,1} \left ( \bar{\Omega} \times (0, T_{\rm max}) \right ) \times C^{2,0} \left ( \bar{\Omega} \times [0, T_{\rm max} ) \right )$ where $T_{\rm max} \in (0, \infty]$ such that both $u>0$ and $v>0$ in $\bar{\Omega} \times (0,T_{\rm max})$, and such that 
    \begin{align}
        \text{if } T_{\rm max} <\infty \text{ then either } \limsup_{t\to T_{\rm max}} \left \| u(\cdot,t) \right \|_{L^\infty(\Omega)} = \infty \text{ or } \liminf_{t \to T_{\rm max}} \inf_{x\in \Omega} v(x,t) =0.
    \end{align}
\end{lemma}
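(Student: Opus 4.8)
The plan is to adapt the standard fixed-point construction for chemotaxis systems, the only genuinely new feature being the singular sensitivity $\chi(v) = v^{-\alpha}$, which forces us to keep $v$ bounded away from zero throughout the construction. First I would eliminate $v$ from the system: since the second equation is elliptic, for each fixed $u(\cdot,t) \geq 0$ the problem $-\Delta v + v = u$ with homogeneous Neumann data has a unique solution $v = (I-\Delta)^{-1}u$, and expressing it through the Green's function $G$ of $-\Delta + 1$, which is positive and bounded below by a constant $c_0>0$ on $\bar\Omega \times \bar\Omega$, yields the pointwise lower bound $v(\cdot,t) \geq c_0 \int_\Omega u(\cdot,t)$. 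Because $\frac{d}{dt}\int_\Omega u = \int_\Omega f(u)$ and $\int_\Omega u_0 > 0$, the total mass stays strictly positive on a short time interval, so $v$ admits a uniform positive lower bound $\delta > 0$ there; this renders $v^{-\alpha}$ and hence the drift $u v^{-\alpha}\nabla v$ perfectly regular.

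With this lower bound in hand, I would set up a contraction mapping on a closed ball of $C^0([0,T]; C^0(\bar\Omega))$ for $T$ small. Given $\tilde u$ in the ball, solve the elliptic equation for $\tilde v$, form the drift coefficient $\chi(\tilde v)\nabla\tilde v$, which is H\"older continuous by elliptic Schauder theory and bounded since $\tilde v \geq \delta$, and define $\Phi(\tilde u)$ as the solution of the resulting linear parabolic problem, conveniently expressed through the variation-of-constants formula with the Neumann heat semigroup $e^{t\Delta}$. The standard $L^p$--$L^q$ smoothing estimates for $e^{t\Delta}$ and $\nabla e^{t\Delta}$, together with the local Lipschitz dependence of $\tilde v$ and of $f$ on $\tilde u$, show that $\Phi$ maps the ball into itself and is a contraction once $T$ is sufficiently small; Banach's theorem then yields a unique mild solution, and uniqueness on the full interval follows by a continuation argument.

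Parabolic and elliptic Schauder estimates upgrade this mild solution to the claimed classical regularity through the usual bootstrap. Positivity of $v$ is immediate from the strong maximum principle for the elliptic operator, while positivity of $u$ follows from the parabolic maximum principle, since $f(0)=0$ makes $u\equiv 0$ a subsolution. Finally, the extensibility criterion is obtained by contraposition: if $T_{\rm max} < \infty$ while both $\limsup_{t \to T_{\rm max}}\|u(\cdot,t)\|_{L^\infty(\Omega)} < \infty$ and $\liminf_{t\to T_{\rm max}} \inf_{x\in\Omega} v(x,t) > 0$, then the drift coefficient $\chi(v)\nabla v$ and the source $f(u)$ remain bounded in suitable norms up to $T_{\rm max}$, so the fixed-point construction can be restarted from a time close to $T_{\rm max}$ with a uniform existence time, extending the solution beyond $T_{\rm max}$ and contradicting maximality. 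I expect the main obstacle to be exactly the bookkeeping around the singular sensitivity, namely ensuring that the positive lower bound on $v$ persists and enters the contraction estimates correctly, since everywhere else the argument is the routine chemotaxis local-existence machinery of \cite{FWY}.
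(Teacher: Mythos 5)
Your outline is correct and coincides with what the paper does: the paper gives no proof of this lemma beyond citing \cite{FWY}[Lemma 2.2], and your construction (Green's-function lower bound $v \geq c_0\int_\Omega u$ to tame the singular sensitivity, Banach fixed point via the Neumann heat semigroup, Schauder bootstrap, maximum principles for positivity, and contraposition for the dichotomy) is exactly the standard argument behind that citation. The only point to tidy in a full write-up is ensuring the mass lower bound, and hence the bound $v\geq\delta>0$, holds uniformly for all candidates in the fixed-point ball rather than just for the eventual solution, which is routine for small $T$.
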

From now on, we assume that $(u,v)$ is a local classical solution of \eqref{1} as clarified in Lemma \ref{lc}.  We proceed by establishing an $L\ln L$ estimate for solutions when $\alpha<\frac{1}{2}$, all without relying on the largeness assumption concerning $\mu$:
\begin{lemma} \label{l1}
    Assume that $\alpha <\frac{1}{2}$ and $\beta <1$, then there exists a positive constant $C$ such that 
    \begin{equation}\label{l1-1}
       \int_\Omega u (\cdot,t) \ln (u(\cdot,t)+e) \leq C
    \end{equation}
    for all $t \in (0,T_{\rm max})$.
\end{lemma}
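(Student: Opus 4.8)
The plan is to derive a differential inequality for $y(t):=\int_\Omega u\ln(u+e)$ and close it by Grönwall. Writing $F(s)=s\ln(s+e)$, so that $F'(s)=\ln(s+e)+\frac{s}{s+e}$ and $F''(s)=\frac{s+2e}{(s+e)^2}$, I would test the first equation of \eqref{1} with $F'(u)$. After integrating by parts and invoking the homogeneous Neumann conditions this produces
\begin{equation}
\frac{d}{dt}\int_\Omega F(u)+\int_\Omega F''(u)|\nabla u|^2=\int_\Omega F''(u)\,u\,v^{-\alpha}\,\nabla u\cdot\nabla v+\int_\Omega F'(u)f(u).
\end{equation}
Two elementary pointwise facts will drive everything: $F''(s)\ge\frac{1}{s+e}$, so the dissipation controls $\int_\Omega\frac{|\nabla u|^2}{u+e}$; and $F''(s)s^2\le s$, which tames the cross term below. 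I would first record the mass bound: integrating the first equation and using that $\frac{s^2}{\ln^\beta(s+e)}$ is superlinear yields $\int_\Omega u(\cdot,t)\le m$ for all $t$, whence also $\int_\Omega v=\int_\Omega u\le m$.

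For the source term, since $F'(s)\ge\ln(s+e)$ and $F'(s)\le\ln(s+e)+1\le 2\ln(s+e)$, one gets
\begin{equation}
\int_\Omega F'(u)f(u)\le C\int_\Omega u\ln(u+e)-\mu\int_\Omega u^2\ln^{1-\beta}(u+e)+C,
\end{equation}
so the degradation supplies a \emph{superquadratic} sink $-\mu\int_\Omega u^2\ln^{1-\beta}(u+e)$. This extra factor $\ln^{1-\beta}(u+e)$, genuine only because $\beta<1$, is exactly what will allow me to dispense with any largeness of $\mu$.

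Turning to the cross term, Young's inequality—pairing $\sqrt{F''(u)}\,\nabla u$ against $\sqrt{F''(u)}\,u\,v^{-\alpha}\nabla v$ and using $F''(u)u^2\le u$—bounds it by $\tfrac12\int_\Omega F''(u)|\nabla u|^2+\tfrac12\int_\Omega u\,v^{-2\alpha}|\nabla v|^2$, the first piece being swallowed by the dissipation. The heart of the matter is thus to estimate $\int_\Omega u\,v^{-2\alpha}|\nabla v|^2$, and I expect this to be the main obstacle, since it couples $u$, the singular weight $v^{-2\alpha}$, and $|\nabla v|^2$ at once. Here the hypothesis $\alpha<\tfrac12$ becomes decisive. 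Testing the second equation $0=\Delta v-v+u$ with $v^{1-2\alpha}$ gives the identity
\begin{equation}
(1-2\alpha)\int_\Omega v^{-2\alpha}|\nabla v|^2+\int_\Omega v^{2-2\alpha}=\int_\Omega u\,v^{1-2\alpha},
\end{equation}
whose right-hand side is controllable precisely because $1-2\alpha>0$ keeps every arising power of $v$ integrable against the bounded mass (via Jensen's inequality, using $\int_\Omega v\le m$). Combining this with the two-dimensional elliptic estimate $\|\nabla v\|_{L^4}\le C\|u\|_{L^{4/3}}\le C\|u\|_{L^1}^{1/2}\|u\|_{L^2}^{1/2}$ and the Gagliardo–Nirenberg inequality, I expect to reach a bound of the form
\begin{equation}
\int_\Omega u\,v^{-2\alpha}|\nabla v|^2\le C\int_\Omega u^2+C.
\end{equation}
Reconciling the singular weight with the factor $u$ and the gradient term is the delicate point, and it is where the restriction $\alpha<\tfrac12$ is genuinely used.

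It remains to absorb $C\int_\Omega u^2$ into the superquadratic sink, and this is where $\beta<1$ pays off with no smallness of $\chi$ and no largeness of $\mu$. Splitting $\Omega$ into $\{u<K\}$ and $\{u\ge K\}$ with $K$ so large that $\ln^{1-\beta}(s+e)\ge 2C/\mu$ for $s\ge K$, one obtains $C\int_\Omega u^2\le\frac{\mu}{2}\int_\Omega u^2\ln^{1-\beta}(u+e)+CK^2|\Omega|$, the low-density part being a pure constant. The same device dominates the remaining $C\int_\Omega u\ln(u+e)$, as well as the term $\int_\Omega F(u)$ that I add to the left-hand side, each by a small multiple of the sink plus a constant (since $s\ln(s+e)\le\varepsilon s^2\ln^{1-\beta}(s+e)+C_\varepsilon$). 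Collecting all contributions within the budget $\mu\int_\Omega u^2\ln^{1-\beta}(u+e)$ yields
\begin{equation}
\frac{d}{dt}\int_\Omega F(u)+\int_\Omega F(u)\le C,
\end{equation}
and Grönwall's inequality then gives \eqref{l1-1}.
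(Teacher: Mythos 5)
Your overall skeleton coincides with the paper's: differentiate $\int_\Omega u\ln(u+e)$, keep the dissipation $\int_\Omega|\nabla u|^2/(u+e)$, extract the superquadratic sink $-\mu\int_\Omega u^2\ln^{1-\beta}(u+e)$ from the source term, and absorb every term of the form $C\int_\Omega u^2$ and $C\int_\Omega u\ln(u+e)$ into that sink at no cost in $\mu$ (the paper records this as the elementary inequality $u^{a_1}\ln^{b_1}(u+e)\le\epsilon u^{a_2}\ln^{b_2}(u+e)+c(\epsilon)$). Those parts of your argument are correct and are exactly where the improvement over Zhao's result comes from.

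The gap is in the one step you yourself flag as delicate: the bound $\int_\Omega u\,v^{-2\alpha}|\nabla v|^2\le C\int_\Omega u^2+C$. The identity you obtain by testing $0=\Delta v-v+u$ with $v^{1-2\alpha}$ controls $\int_\Omega v^{-2\alpha}|\nabla v|^2$ with no weight $u$, which after Cauchy--Schwarz in $u$ is not the quantity you need --- you would need $\int_\Omega v^{-4\alpha}|\nabla v|^4$ --- and the $L^4$ elliptic estimate for $\nabla v$ you invoke does not see the singular weight $v^{-2\alpha}$ at all ($v$ has no uniform positive lower bound here, so $v^{-2\alpha}$ cannot simply be discarded). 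The paper closes this step differently: it writes $v^{-2\alpha}|\nabla v|^2=\left(|\nabla v|^2/v^2\right)^{\alpha}\,|\nabla v|^{2(1-\alpha)}$ and applies a three-factor H\"older inequality with exponents $2$, $1/\alpha$, $2/(1-2\alpha)$, using the a priori bound $\int_\Omega|\nabla v|^2/v^2\le|\Omega|$ (Zhao, Lemma 2.3) for the middle factor and the elliptic estimate $\bigl(\int_\Omega|\nabla v|^{4(1-\alpha)/(1-2\alpha)}\bigr)^{1-2\alpha}\le c\int_\Omega u^2$ for the last; this is precisely where $\alpha<\tfrac12$ enters. Without that (or an equivalent) decomposition your differential inequality does not close, so as written the proof is incomplete at its crux.
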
 
\begin{remark}
Lemma \ref{l1} resembles \cite{Zhao2023}[Lemma 3.2], albeit without the necessity of a largeness condition on $\mu$. This is because the term $-\mu \int_\Omega u^2 \ln^{1-\beta}(u+e)$ can absorb the nonlinear term stemming from chemo-attractant.
\end{remark}
\begin{proof}
Making use of integration by parts and the first equation of \eqref{1}, we obtain
\begin{align}\label{l1.1}
    \frac{d}{dt} \int_\Omega u \ln (u+e) &= \int_\Omega \left ( \ln(u+e) +\frac{u}{u+e} \right ) u_t \notag \\
    &= \int_\Omega \left ( \ln(u+e) +\frac{u}{u+e} \right ) \left ( \Delta u -  \nabla \cdot (\frac{u}{v^\alpha} \nabla v) +ru - \frac{\mu u^2}{ \ln^\beta  (u+e) } \right )  \notag \\
    &= - \int_\Omega \left ( \frac{1}{u+e}+ \frac{e}{(u+e)^2} \right )|\nabla u|^2+ \int_\Omega \left ( \frac{1}{u+e}+ \frac{e}{(u+e)^2} \right ) \frac{u}{v^\alpha} \nabla u \cdot \nabla v \notag \\
    &+ \int_\Omega \left (  \ln(u+e) +\frac{u}{u+e}  \right ) \left (  ru -\frac{\mu u^2 }{\ln^\beta (u+e)}\right ) \notag \\
    &:= I_1 +I_2+I_3.
\end{align}
    It is clear that
    \begin{align}\label{l1.2}
        I_1:= - \int_\Omega \left ( \frac{1}{u+e}+ \frac{e}{(u+e)^2} \right )|\nabla u|^2 \leq -\int_\Omega \frac{|\nabla u|^2 }{u+e}.
    \end{align}
    Using Holder's inequality leads to 
    \begin{align} \label{l1.3}
        I_2 &\leq c_1 \int_\Omega v^{-\alpha } |\nabla u| |\nabla v| \notag \\
        &\leq \frac{1}{2} \int_\Omega \frac{|\nabla u|^2}{u+e} + c_2 \int_\Omega \frac{(u+e)|\nabla v|^2}{v^{2\alpha}}
    \end{align}
    where $c_2 = \frac{c_1^2}{2}$. From \cite{Zhao2023}[Lemma 2.3], we have that $\int_\Omega \frac{|\nabla v|^2}{v^2} \leq |\Omega|$. This, together with Holder's inequality and Young's inequality implies that
    \begin{align}\label{l1.4}
        c_2\int_\Omega \frac{(u+e)|\nabla v|^2}{v^{2\alpha}} &\leq c_2\left ( \int_\Omega (u+e)^2 \right )^{\frac{1}{2}} \left ( \int_\Omega \frac{|\nabla v|^2}{v^2} \right )^\alpha \left ( \int_\Omega |\nabla v|^{\frac{4(1-\alpha)}{1-2\alpha}} \right )^{\frac{1-2\alpha}{2}} \notag \\
        &\leq c_3 \int_\Omega (u+e)^2 +  \left ( \int_\Omega |\nabla v|^{\frac{4(1-\alpha)}{1-2\alpha}} \right )^{1-2\alpha},
    \end{align}
    where $c_3 = c_2 |\Omega|^\alpha$. By the same reasoning as in \cite{Zhao2023}[Lemma 3.2], there exists $c_4>0$ such that 
    \begin{align}\label{l1.5}
         \left ( \int_\Omega |\nabla v|^{\frac{4(1-\alpha)}{1-2\alpha}} \right )^{1-2\alpha} \leq c_4 \int_\Omega u^2.
    \end{align}
    From \eqref{l1.3}, \eqref{l1.4} and \eqref{l1.5}, it follows that 
    \begin{align} \label{l1.6}
        I_2 &\leq \frac{1}{2} \int_\Omega \frac{|\nabla u|^2}{u+e} + c_3 \int_\Omega (u+e)^2 + c_4 \int_\Omega u^2 \notag \\
        &\leq \frac{1}{2} \int_\Omega \frac{|\nabla u|^2}{u+e}+ \frac{\mu}{2} \int_\Omega u^2 \ln^{1-\beta } (u+e) +c_5,
    \end{align}
    where $c_5= C(\mu)>0$ and the last inequality comes from the fact that for any  $\epsilon>0$, there exist $c(\epsilon)>0$ such that 
\begin{align} \label{M}
    u^{a_1}\ln^{b_1}(u+e) \leq \epsilon u^{a_2}\ln^{b_2}(u+e) +c(\epsilon),
\end{align}
where $a_1, a_2, b_1,b_2$ are nonnegative numbers such that $a_1 <a_2$, or $a_1=a_2$ and $b_1<b_2$. Applying \eqref{M} and choosing $\epsilon$ sufficiently small yields
\begin{align} \label{l1.7}
    I_3 +\int_\Omega u \ln(u+e) \leq \frac{ \mu}{2} \int_\Omega u^2 \ln^{1-\beta }(u+e) +c_6,
\end{align}
where $c_6=C(\mu)>0$. Collecting \eqref{l1.2}, \eqref{l1.6}, and \eqref{l1.7} implies that 
\begin{align} 
    \frac{d}{dt} \int_\Omega u \ln(u+e) +\int_\Omega u\ln (u+e) \leq c_7,
\end{align}  \label{l1.8}
where $c_7=c_5+c_6$. Finally, the inequality \eqref{l1-1} is proven by applying Gronwall's inequality to \eqref{l1.8}.    
\end{proof}
The next lemma allows us to obtain an $L\ln L $ estimate for solutions for any $\alpha \in (0,1)$ and $\mu>0$.
\begin{lemma} \label{l2}
    Assume that $\alpha \in \left [ \frac{1}{2}, 1 \right ) $ and $\beta <1$, then there exists a positive constant $C$ such that 
    \begin{equation}\label{l2-1}
       \int_\Omega u (\cdot,t) \ln (u(\cdot,t)+e) \leq C
    \end{equation}
    for all $t \in (0,T_{\rm max})$.
\end{lemma}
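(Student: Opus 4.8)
The plan is to follow the proof of Lemma \ref{l1} essentially line by line, noting that nothing in the identity \eqref{l1.1}, the bound \eqref{l1.2} for $I_1$, the Young splitting \eqref{l1.3} of $I_2$, or the estimate \eqref{l1.7} for $I_3$ uses the restriction $\alpha<\frac{1}{2}$. Differentiating $\int_\Omega u\ln(u+e)$ again gives the decomposition $I_1+I_2+I_3$, and after \eqref{l1.3} the entire problem reduces to controlling the single quantity
\begin{align*}
J:=\int_\Omega\frac{(u+e)|\nabla v|^2}{v^{2\alpha}}.
\end{align*}
The only obstruction is that the three–exponent Hölder splitting \eqref{l1.4}–\eqref{l1.5} relies on the exponent $\frac{2}{1-2\alpha}$, which is admissible only for $\alpha<\frac{1}{2}$; when $\alpha\in[\frac{1}{2},1)$ it degenerates and a different estimate for $J$ is needed, one that still produces at most $\int_\Omega u^2$ so that the sub-quadratic dissipation can absorb it.

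My main idea is to tame the singular weight $v^{-2\alpha}$ through a time–uniform pointwise lower bound $v(x,t)\ge\eta>0$ on $\bar\Omega\times(0,T_{\rm max})$. Such a bound is a consequence of the second equation of \eqref{1}: representing $v=(I-\Delta)^{-1}u$ via the positive Neumann Green's function of $I-\Delta$, together with the uniform mass bound $\int_\Omega u(\cdot,t)\le C$ (obtained by integrating the first equation and an ODE/Jensen comparison), one arrives at $\inf_{\bar\Omega}v\ge\eta$ with $\eta$ independent of $t$, as in the framework of \cite{Zhao2023}. Granting this, I estimate $v^{-2\alpha}\le\eta^{-2\alpha}$ and Hölder's inequality to get
\begin{align*}
J\le\eta^{-2\alpha}\int_\Omega(u+e)|\nabla v|^2\le\eta^{-2\alpha}\,\|u+e\|_{L^2(\Omega)}\,\|\nabla v\|_{L^4(\Omega)}^2 .
\end{align*}

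Next I would invoke elliptic regularity for $0=\Delta v-v+u$: since $\|v\|_{W^{2,4/3}(\Omega)}\le C\|u\|_{L^{4/3}(\Omega)}$ and $W^{1,4/3}(\Omega)\hookrightarrow L^4(\Omega)$ in dimension two, one has $\|\nabla v\|_{L^4}\le C\|u\|_{L^{4/3}}$. Interpolating $\|u\|_{L^{4/3}}\le\|u\|_{L^2}^{1/2}\|u\|_{L^1}^{1/2}$ and using the mass bound yields $\|\nabla v\|_{L^4}^2\le C\|u\|_{L^2}$, so that $J\le C\int_\Omega u^2+C$. Exactly as in Lemma \ref{l1}, the role of the sub-logistic term is that \eqref{M} allows the dissipation to absorb this, giving $c_2 J\le\frac{\mu}{2}\int_\Omega u^2\ln^{1-\beta}(u+e)+C$; combined with the bound for $I_3$ this produces
\begin{align*}
\frac{d}{dt}\int_\Omega u\ln(u+e)+\int_\Omega u\ln(u+e)\le C,
\end{align*}
and Gronwall's inequality then delivers \eqref{l2-1}.

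I expect the main obstacle to be the first step, namely securing the \emph{time–uniform} lower bound $v\ge\eta$ (equivalently a time–uniform lower bound on the mass), because this is precisely the device that lets the weakly singular sensitivity be handled in the regime $\alpha\in[\frac{1}{2},1)$, where the elementary Hölder chain of Lemma \ref{l1} collapses. Once that bound is in hand, the remainder is the same ``absorption by the degradation term'' mechanism as in Lemma \ref{l1}, and the only care required is to keep every constant, and in particular $\eta$, independent of $t$.
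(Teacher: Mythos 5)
The decisive step in your argument --- a time-uniform pointwise lower bound $v\ge\eta>0$ on $\bar\Omega\times(0,T_{\rm max})$ --- is not available here, and is in fact false in general. Integrating the first equation of \eqref{1} gives $\frac{d}{dt}\int_\Omega u=r\int_\Omega u-\mu\int_\Omega\frac{u^2}{\ln^\beta(u+e)}$, so for $r<0$ the mass obeys $\int_\Omega u(\cdot,t)\le e^{rt}\int_\Omega u_0\to0$; since integrating the second equation yields $\int_\Omega v=\int_\Omega u$, it follows that $\inf_\Omega v(\cdot,t)\le\frac{1}{|\Omega|}\int_\Omega u(\cdot,t)\to0$. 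The Green's-function representation only gives $v\ge c\int_\Omega u$, i.e.\ it reduces a lower bound on $v$ to a lower bound on the mass, and the a priori information at this stage (an \emph{upper} mass bound) supplies no such thing. This is precisely why the extensibility criterion in Lemma \ref{lc} retains the alternative $\liminf_{t\to T_{\rm max}}\inf_{x\in\Omega}v(x,t)=0$, and why the difficulty of the weakly singular regime cannot be removed by freezing $v^{-2\alpha}\le\eta^{-2\alpha}$: if such an $\eta$ existed, the sensitivity would be effectively bounded and the whole case distinction $\alpha<\frac12$ versus $\alpha\ge\frac12$ would be moot.

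The paper instead avoids any pointwise lower bound on $v$. It controls $\int_\Omega(u+e)v^{-2\alpha}|\nabla v|^2$ by the iteration of \cite{Zhao2023}[Lemmas 3.3--3.4], built on the scale-invariant estimate $\int_\Omega\frac{|\nabla v|^2}{v^2}\le|\Omega|$, which trades factors of $v^{-1}$ against factors of $|\nabla v|$ and lowers the exponent from $\alpha_0=\alpha$ through $\alpha_j=2^j\alpha+1-2^j$ down to some $\alpha_k<\frac12$, at which point the H\"older argument of Lemma \ref{l1} applies; the boundary case $\alpha=\frac{2^k-1}{2^k}$ is treated separately. The endpoint of that iteration is the same bound $\frac12\int_\Omega\frac{|\nabla u|^2}{u+e}+c\int_\Omega u^2+c$ that you aim for, and your final absorption of $c\int_\Omega u^2$ into $\frac{\mu}{2}\int_\Omega u^2\ln^{1-\beta}(u+e)$ via \eqref{M} is exactly the paper's; but the route you propose to reach that point rests on a false premise and would need to be replaced by the iteration (or an equivalent device that, like it, is insensitive to the size of $v$).
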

\begin{remark}

The proof of Lemma \ref{l2} closely mirrors \cite{Zhao2023}[Lemma 3.6], with the only distinction being our utilization of $-\mu \int_\Omega u^2 \ln^{1-\beta}(u+e)$ instead of solely $-\mu \int_\Omega u^2$, as employed in \cite{Zhao2023}.
\end{remark}

\begin{proof}
    Suppose that $\alpha \in \left ( \frac{2^k-1}{2^k}, \frac{2^{k+1}-1}{2^{k+1}} \right )$ with $k\geq 1$, we set 
    \begin{align} \label{l2.1}
        \alpha_j = 2^j \alpha +1 -2^j
    \end{align}
    then we have from \cite{Zhao2023}[Lemma 3.3] that
    \begin{align} \label{l2.2}
        \alpha_0 &= \alpha, \quad \alpha_j \in \left ( \frac{1}{2},1 \right ), j=1,2,..., k-1, \notag \\
        \alpha_k &\in \left (0,\frac{1}{2} \right ), \quad \alpha_{k+1} \in (-1,0).
    \end{align}
    Following the proof of Lemma 3.3 and Lemma 3.4 in \cite{Zhao2023} with some modifications, one can verify that there exist $c_1,c_2,c_3>0$ such that
    \begin{align} \label{l2.3}
        \int_\Omega (u+e) v^{-2 \alpha} |\nabla v|^2 &\leq \frac{1}{2} \int_\Omega \frac{|\nabla u|^2}{u+e} +c_1 \int_\Omega u^2 +c_2 \notag \\
        &\leq \frac{1}{2} \int_\Omega \frac{|\nabla u|^2}{u+e} + \frac{\mu}{2} \int_\Omega u^2 \ln^{1-\beta }(u+e)+c_3,
    \end{align}
    where the last inequality comes from \eqref{M}. Following the same argument in the proof of Lemma \ref{l1} but replacing the estimates \eqref{l1.4} and \eqref{l1.5} by \eqref{l2.3} proves \eqref{l2-1}. In case $\alpha = \frac{2^k-1}{2^k}$, we repeat the same argument in Remark 3 in \cite{Zhao2023} to deduce \eqref{l2-1}, which completes the proof.
\end{proof}
\begin{lemma} \label{l3'}
    Let $\mu>0$, $r\in \mathbb{R}$ and $\alpha \in \left ( 0,1 \right )$  then 
    \begin{align} \label{l3'.1}
        \sup_{t\in (0, T_{\rm max})} \int_\Omega |\nabla v(\cdot,t)|^2 <\infty.
    \end{align}
\end{lemma}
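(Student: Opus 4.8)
The plan is to exploit the elliptic structure of the second equation. Since $v$ solves $-\Delta v + v = u$ with homogeneous Neumann data, the natural first move is to test this identity against $v$ itself; the boundary term vanishes by the Neumann condition and integration by parts yields the energy identity
\[
\int_\Omega |\nabla v|^2 + \int_\Omega v^2 = \int_\Omega uv .
\]
Thus the entire problem reduces to controlling the coupling integral $\int_\Omega uv$ by a constant plus a small multiple of $\|v\|_{H^1}^2$, uniformly in $t\in(0,T_{\rm max})$.

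The key analytic input is the two-dimensional Moser--Trudinger inequality in its duality (Orlicz) form. I would work with the conjugate pair of Young functions $\Phi(t)=(1+t)\ln(1+t)-t$ and $\Psi(s)=e^{s}-s-1$, which generate the Orlicz spaces $L\ln L(\Omega)$ and $\exp L(\Omega)$ respectively. Orlicz--Holder gives $\int_\Omega uv \le 2\,\|u\|_{L\ln L}\,\|v\|_{\exp L}$, while the Moser--Trudinger embedding $H^1(\Omega)\hookrightarrow \exp L(\Omega)$ (valid because $\Omega\subset\mathbb{R}^2$) gives $\|v\|_{\exp L}\le c\,\|v\|_{H^1}$ with $c$ depending only on $\Omega$. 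The crucial point is that this dependence on $\|v\|_{H^1}$ is \emph{linear}, not exponential. Combining these with the uniform $L\ln L$ bound supplied by Lemmas \ref{l1} and \ref{l2}, namely $\int_\Omega u\ln(u+e)\le C$ (which, together with the mass bound $\int_\Omega u\le \int_\Omega u\ln(u+e)\le C$, forces $\|u\|_{L\ln L}\le C'$), I obtain $\int_\Omega uv \le C''\|v\|_{H^1}$ with $C''$ independent of $t$.

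Feeding this back into the energy identity and applying Young's inequality, $C''\|v\|_{H^1}\le \tfrac12\|v\|_{H^1}^2+\tfrac12 C''^2$, so that $\|v\|_{H^1}^2\le C''^2$ uniformly in $t\in(0,T_{\rm max})$; in particular $\sup_{t}\int_\Omega|\nabla v|^2<\infty$, which is the claim.

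The main obstacle is precisely the coupling term $\int_\Omega uv$: since the $L\ln L$ bound yields no $L^p$ control of $u$ for any $p>1$, a plain Holder/Young splitting against $\int_\Omega v^2$ fails, and the naive ``modular'' form of Moser--Trudinger (bounding $\int_\Omega e^{\eta v}$ by $\exp(c\eta^2\|\nabla v\|_{L^2}^2)$) introduces an exponential dependence on $\|\nabla v\|_{L^2}^2$ that cannot be absorbed. The Orlicz-norm formulation above is what converts this into a bound linear in $\|v\|_{H^1}$, and hence closable. If one prefers to keep everything explicit, the same effect is obtained by the pointwise Young inequality $\eta uv\le \Phi(u)+\Psi(\eta v)$, which gives $\int_\Omega uv\le \tfrac1\eta\int_\Omega\Phi(u)+\tfrac1\eta\int_\Omega e^{\eta v}$, and then choosing $\eta=1/\|v\|_{\exp L}$ so that $\int_\Omega e^{\eta v}\le C$ by definition of the Orlicz norm; either route leads to $\int_\Omega uv\le C''\|v\|_{H^1}$, with the Moser--Trudinger inequality as the indispensable ingredient.
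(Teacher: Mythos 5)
Your argument is correct and, notably, it supplies an actual proof where the paper gives essentially none: the paper's proof of Lemma \ref{l3'} consists of invoking the uniform bound on $\int_\Omega u\ln(u+e)$ from Lemmas \ref{l1} and \ref{l2} and then deferring wholesale to the proof of Lemma 3.7 in \cite{Zhao2023}. The argument being deferred to is the classical Nagai--Senba--Yoshida-type one used throughout this literature: test $0=\Delta v - v + u$ with $v$ to get $\int_\Omega|\nabla v|^2+\int_\Omega v^2=\int_\Omega uv$, then control the coupling term by the entropy (Gibbs) inequality $\int_\Omega uv\le \frac{1}{s}\int_\Omega u\ln u+\frac{m}{s}\ln\left(\frac{1}{m}\int_\Omega e^{sv}\right)+C$ with $m=\int_\Omega u$, and invoke the Moser--Trudinger inequality in modular form, $\ln\int_\Omega e^{sv}\le \frac{s^2}{8\pi}\|\nabla v\|_{L^2(\Omega)}^2+s\bar v+C$, so that for $s$ small the resulting term $\frac{ms}{8\pi}\|\nabla v\|_{L^2(\Omega)}^2$ is absorbed into the left-hand side. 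Your route replaces this by the Orlicz-norm formulation of the same duality: Orlicz--H\"older between $L\ln L$ and $\exp L$ together with the bounded (hence linear) embedding $H^1(\Omega)\hookrightarrow \exp L(\Omega)$, giving $\int_\Omega uv\le C\|v\|_{H^1}$, after which ordinary Young closes the estimate. Both proofs rest on the same $L\ln L$/$\exp L$ duality and on Moser--Trudinger; yours trades the explicit modular constant for an abstract embedding constant and has a cleaner absorption step, at the cost of invoking Orlicz-space machinery. One small correction to your discussion of obstacles: the modular form is not actually unusable --- the standard proof does not exponentiate blindly but takes the logarithm of $\int_\Omega e^{sv}$ via the entropy inequality, which converts the apparent exponential dependence into a quadratic term with prefactor $O(s)$ that is absorbed by choosing $s$ small; this is precisely how the cited proof closes.
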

\begin{proof}
     Thanks to Lemma \ref{l1} and Lemma \ref{l2}, we have that 
    \begin{align}
         \sup_{t \in (0, T_{\rm max})}\int_\Omega u(\cdot,t) \ln u(\cdot,t) <\infty,
    \end{align}
    for any $\mu>0$. Now, we just repeat the arguments in the proof \cite{Zhao2023}[Lemma 3.7] to prove \eqref{l3'.1}. 
\end{proof}
We can now establish that the local classical solution $(u,v)$ is global for any $\alpha \in (0,1)$.  However, since the proof closely resembles that of \cite{Zhao2023}[Lemma 3.8], we will refrain from presenting it here to avoid redundancy.
\begin{lemma}
    Let $\mu>0$, $\alpha \in (0,1)$, $\beta \in (0,1)$, and $r \in \mathbb{R}$ and suppose that $\eqref{initial}$ holds. Then the problem \eqref{1} possesses a uniquely global classical solution.
\end{lemma}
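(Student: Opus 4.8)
The plan is to upgrade the $L\ln L$ bound into successively stronger integrability, culminating in an $L^\infty$ bound, and then to invoke the extensibility criterion from Lemma \ref{lc}. Concretely, I would proceed as follows. From Lemma \ref{l3'} we have a uniform $H^1$ bound on $v$, and the elliptic equation $0=\Delta v - v + u$ lets us trade regularity of $v$ for integrability of $u$: standard elliptic $L^p$ and Sobolev estimates applied to $0=\Delta v-v+u$ yield $\sup_t\|v(\cdot,t)\|_{W^{1,q}}<\infty$ for a suitable $q$, and in two dimensions this gives H\"older continuity and in particular a uniform pointwise \emph{positive} lower bound $\inf_{\Omega} v(\cdot,t)\ge \eta>0$. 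Securing this lower bound on $v$ is important because it controls the singular sensitivity $v^{-\alpha}$, effectively reducing the analysis to the non-singular regime; I would establish it first.

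Next I would run the customary $L^p$ bootstrap for $u$. Testing the first equation of \eqref{1} against $(u+e)^{p-1}$ and integrating by parts produces the dissipation term $-\,c\,p\int_\Omega (u+e)^{p-2}|\nabla u|^2$ together with a cross term $\int_\Omega (u+e)^{p-1}\chi(v)\nabla u\cdot\nabla v$ and the reaction contribution coming from $ru-\mu u^2/\ln^\beta(u+e)$. Using the lower bound $v\ge\eta$ to bound $\chi(v)=v^{-\alpha}\le\eta^{-\alpha}$, Young's inequality absorbs the cross term into the gradient dissipation at the cost of a term $\int_\Omega (u+e)^p|\nabla v|^2$, which via the elliptic relation and the $H^1$ estimate can be controlled by $\int_\Omega (u+e)^{p+1}$ (up to lower order). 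The sub-logistic degradation then supplies the crucial absorbing term: since $-\mu\int_\Omega u^2\ln^{-\beta}(u+e)\cdot(u+e)^{p-1}$ behaves like $-\mu\int_\Omega (u+e)^{p+1}\ln^{-\beta}(u+e)$, inequality \eqref{M} lets the genuinely super-$(p+1)$ growth dominate the $(p+1)$-power coming from the chemotactic term for every fixed $p$. This closes a differential inequality of the form $\frac{d}{dt}\int_\Omega (u+e)^p+\int_\Omega (u+e)^p\le C_p$, and Gronwall gives $\sup_t\|u(\cdot,t)\|_{L^p}<\infty$ for arbitrarily large $p$.

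With a uniform $L^p$ bound for $u$ with $p>2$ (hence in particular $p>n=2$) in hand, elliptic regularity for $0=\Delta v-v+u$ promotes $v$ to $W^{2,p}\hookrightarrow C^1(\bar\Omega)$ uniformly in $t$, so $\nabla v$ is uniformly bounded in $L^\infty$. The first equation then becomes a scalar parabolic equation for $u$ with bounded drift $\chi(v)\nabla v$ and a source of at most quadratic growth; a standard Moser–Alikakos iteration, or equivalently a heat-semigroup argument using $\|e^{t\Delta}\nabla\cdot\varphi\|_{L^\infty}\le c(1+t^{-1/2-1/q})e^{-\lambda t}\|\varphi\|_{L^q}$, yields $\sup_t\|u(\cdot,t)\|_{L^\infty}<\infty$. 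Finally, this $L^\infty$ bound together with the lower bound $v\ge\eta$ contradicts both alternatives in the extensibility criterion of Lemma \ref{lc} unless $T_{\max}=\infty$, so the solution is global; the claimed regularity then follows from parabolic and elliptic Schauder theory.

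The main obstacle I anticipate is the $L^p$ step, specifically verifying that the sub-logistic term $-\mu\int_\Omega (u+e)^{p+1}\ln^{-\beta}(u+e)$ genuinely dominates the $(u+e)^{p+1}$ contribution generated by the chemotactic cross term \emph{for every} $p$, \emph{uniformly in the constant $\mu$} (i.e.\ without a largeness assumption). The point is that the gain $\ln^{1-\beta}$ in \eqref{M} beats any fixed power-matched competitor once $\beta<1$, so for each fixed $p$ the absorption works regardless of how small $\mu$ is; but one must check that the thresholds and the constants $C_p$ do not degenerate as $p\to\infty$ in a way that would obstruct the subsequent Moser iteration. Controlling the $p$-dependence of these constants, and ensuring the lower bound on $v$ is uniform in time, are the delicate points, whereas the elliptic regularity and the final iteration are routine.
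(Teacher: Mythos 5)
There are two genuine gaps in your argument, one of which concerns the central analytic step of the whole paper. First, your derivation of the positive lower bound $\inf_\Omega v(\cdot,t)\ge\eta>0$ does not work: uniform $W^{1,q}$ bounds and H\"older continuity of $v$ control $v$ from \emph{above} and control its modulus of continuity, but a H\"older continuous function can perfectly well vanish somewhere, so no amount of elliptic regularity for $0=\Delta v-v+u$ produces positivity. The standard route (used in \cite{FWY} and \cite{Zhao2023}, to which the paper defers for this lemma) is the pointwise estimate $v\ge c\,\|u(\cdot,t)\|_{L^1(\Omega)}$ for the Neumann problem $-\Delta v+v=u\ge0$, combined with a separate lower bound on the mass $\int_\Omega u$; that mass bound is a nontrivial step you would still have to supply.

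Second, and more seriously, your absorption step in the $L^p$ bootstrap is backwards. Testing against $u^{p-1}$, the degradation contributes $-\mu\int_\Omega u^{p+1}\ln^{-\beta}(u+e)$, which grows \emph{slower} than $u^{p+1}$, not faster: there is no ``super-$(p+1)$ growth'' and no gain of $\ln^{1-\beta}$ here (that gain occurs only in Lemmas \ref{l1}--\ref{l2}, where one tests against $\ln(u+e)$ and obtains $-\mu\int u^{2}\ln^{1-\beta}(u+e)$ versus $\int u^2$). Inequality \eqref{M} applies with $a_1=a_2=p+1$, $b_1=-\beta<b_2=0$, i.e.\ it says $u^{p+1}\ln^{-\beta}(u+e)\le\epsilon u^{p+1}+c(\epsilon)$ --- the opposite of what you need. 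Since $c_2u^{p+1}-\mu u^{p+1}\ln^{-\beta}(u+e)\to+\infty$ pointwise, the sub-logistic term cannot absorb the chemotactic contribution $c_2\int_\Omega u^{p+1}$ for \emph{any} $\mu$, large or small; this is precisely why Zhao needed quadratic degradation with large $\mu$, and why the present paper instead absorbs $c_2\int_\Omega u^{p+1}$ into the dissipation $\int_\Omega|\nabla u^{p/2}|^2$ via the logarithmically refined Gagliardo--Nirenberg inequality of Lemmas \ref{C52.LGN}--\ref{C52.ILGN}, using the uniform $L\ln L$ bound from Lemmas \ref{l1}--\ref{l2} to make the interpolation constant small; the degradation term is only used to dominate the lower-order term $r\int_\Omega u^p$ as in \eqref{l3.5}. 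Finally, note that the statement you are proving asks only for global existence (the paper simply invokes \cite{Zhao2023}[Lemma 3.8] here, with the $L\ln L$ and $\int_\Omega|\nabla v|^2$ bounds of Lemmas \ref{l1}--\ref{l3'} as input), so you are attempting to prove the stronger Theorem \ref{thm1} at this stage; that is legitimate in principle, but only with the absorption mechanism corrected as above.
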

The following lemma serves as tool to obtain the $L^p$ bounds with $p\geq 1$ for solutions of \eqref{1}. It is a direct consequence of \cite{Winkler_preprint}[Corollary 1.2], however for the convenience, we provide the detail proof here.
\begin{lemma} \label{C52.LGN} If $\Omega \subset \mathbb{R}^2$ is a bounded domain with smooth boundary, then for each $m>0$ and $\gamma\geq 0$ there exists $C=C(m,\gamma)>0$ with the property that whenever $\phi \in C^1 (\bar{\Omega})$ is positive in $\bar{\Omega}$
    \begin{align}\label{C52.LGN.1}
        \int_\Omega \phi^ {m+1} \ln^{\gamma}(\phi+e) \leq C \left ( \int_\Omega \phi \ln^{\gamma}(\phi+e) \right ) \left ( \int_\Omega |\nabla \phi^{\frac{m}{2}}|^2 \right  ) +C \left ( \int_\Omega  \phi \right )^m \left ( \int_\Omega \phi \ln^{\gamma}(\phi+e) \right ).
    \end{align}
\end{lemma}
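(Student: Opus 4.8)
The plan is to deduce \eqref{C52.LGN.1} from the two-dimensional Sobolev embedding $W^{1,1}(\Omega)\hookrightarrow L^2(\Omega)$ applied to a carefully weighted power of $\phi$, reserving one factor of $\phi$ together with the full logarithmic weight so that the quantities $\int_\Omega \phi\ln^\gamma(\phi+e)$ and $\int_\Omega |\nabla \phi^{m/2}|^2$ reappear on the right-hand side. Concretely, I set
\[
w := \phi^{\frac{m+1}{2}}\ln^{\frac{\gamma}{2}}(\phi+e),
\]
which belongs to $C^1(\bar\Omega)$ because $\phi>0$, and whose square $w^2=\phi^{m+1}\ln^\gamma(\phi+e)$ is exactly the integrand on the left of \eqref{C52.LGN.1}. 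Since $\Omega\subset\mathbb{R}^2$ is bounded and smooth, there is a constant $c>0$, depending only on $\Omega$, with
\[
\int_\Omega w^2 \le c\Bigl(\int_\Omega |\nabla w|\Bigr)^2 + c\Bigl(\int_\Omega w\Bigr)^2 ,
\]
so the proof reduces to estimating the two terms on the right in terms of $M:=\int_\Omega \phi\ln^\gamma(\phi+e)$, $D:=\int_\Omega |\nabla \phi^{m/2}|^2$, and $m_0:=\int_\Omega \phi$.

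For the gradient term I would differentiate $w$ and note that the piece coming from differentiating the logarithm is of strictly lower order: using $\phi/(\phi+e)\le 1$ and $\ln(\phi+e)\ge 1$ one obtains the pointwise bound $|\nabla w|\le C(m,\gamma)\,\phi^{1/2}\ln^{\gamma/2}(\phi+e)\,|\nabla \phi^{m/2}|$, after rewriting $\phi^{(m-1)/2}|\nabla\phi|=\tfrac{2}{m}\phi^{1/2}|\nabla\phi^{m/2}|$. The Cauchy--Schwarz inequality then yields
\[
\Bigl(\int_\Omega |\nabla w|\Bigr)^2 \le C\,M\,D,
\]
which is precisely the first term of \eqref{C52.LGN.1}.

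For the zeroth-order term, Cauchy--Schwarz applied to the splitting $w=\bigl(\phi^{1/2}\ln^{\gamma/2}(\phi+e)\bigr)\cdot\phi^{m/2}$ gives $(\int_\Omega w)^2\le M\int_\Omega \phi^m$, so it remains to absorb $\int_\Omega \phi^m$. Here I would invoke the auxiliary weight-free two-dimensional Gagliardo--Nirenberg inequality
\[
\int_\Omega \phi^m \le C\int_\Omega |\nabla \phi^{m/2}|^2 + C\Bigl(\int_\Omega \phi\Bigr)^m ,
\]
which follows by applying the standard Gagliardo--Nirenberg inequality to $\psi=\phi^{m/2}$ in the form $\|\psi\|_{L^2}\le C\|\nabla\psi\|_{L^2}^{a}\|\psi\|_{L^{2/m}}^{1-a}+C\|\psi\|_{L^{2/m}}$ with $a=(m-1)/m$ when $m\ge 1$ (and then Young's inequality), and directly from Hölder's inequality $\|\psi\|_{L^2}\le|\Omega|^{\theta}\|\psi\|_{L^{2/m}}$ when $0<m<1$. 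Substituting this bound gives $(\int_\Omega w)^2\le C\,M\,D + C\,m_0^m\,M$, and collecting the two estimates proves \eqref{C52.LGN.1} with a constant depending only on $m$, $\gamma$, and $\Omega$.

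The main obstacle is the zeroth-order term: unlike the gradient term, it cannot be matched by a single application of Cauchy--Schwarz, and one must recognize that the leftover factor $\int_\Omega \phi^m$ is itself governed by a weight-free Gagliardo--Nirenberg inequality whose lower-order term supplies exactly the $(\int_\Omega \phi)^m$ factor. Handling the case distinction $m\ge 1$ versus $0<m<1$ there, and checking that every constant depends only on $m,\gamma,\Omega$ and not on $\phi$, is the only genuinely delicate point; the logarithmic weight itself is harmless, since its derivative contributes only strictly lower-order terms that are absorbed into the same estimates.
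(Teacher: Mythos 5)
Your proof is correct and shares the paper's core mechanism: both apply a two-dimensional Sobolev embedding to $w=\phi^{\frac{m+1}{2}}\ln^{\frac{\gamma}{2}}(\phi+e)$, and your pointwise bound $|\nabla w|\le C(m,\gamma)\,\phi^{1/2}\ln^{\gamma/2}(\phi+e)\,|\nabla\phi^{m/2}|$ followed by Cauchy--Schwarz is exactly the paper's treatment of the gradient term. The two arguments part ways on the zeroth-order term. The paper takes the embedding in the form $\|w\|_{L^2}^2\le c\left(\int_\Omega|\nabla w|\right)^2+c\left(\int_\Omega w^{\frac{2}{m+1}}\right)^{m+1}$ (this is the content of the cited Winkler corollary; note $\tfrac{2}{m+1}<1$ for $m>1$), after which a single H\"older interpolation, $\int_\Omega\phi\ln^{\frac{\gamma}{m+1}}(\phi+e)\le\left(\int_\Omega\phi\right)^{\frac{m}{m+1}}\left(\int_\Omega\phi\ln^{\gamma}(\phi+e)\right)^{\frac{1}{m+1}}$, yields the second term of \eqref{C52.LGN.1} directly. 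You instead use the genuinely standard $W^{1,1}\hookrightarrow L^2$ with the $L^1$ lower-order norm, pay for it with Cauchy--Schwarz plus an auxiliary weight-free Gagliardo--Nirenberg bound on $\int_\Omega\phi^m$, and absorb the resulting extra product of $\int_\Omega\phi\ln^\gamma(\phi+e)$ with $\int_\Omega|\nabla\phi^{m/2}|^2$ into the first term. Both routes work; yours avoids the sub-unit-exponent Sobolev inequality at the cost of the case split $m\ge1$ versus $0<m<1$ and, for $m>2$, of the extension of Gagliardo--Nirenberg to the quasi-norm $L^{2/m}$ with $2/m<1$ --- a well-known fact in this literature, but the same kind of nonstandard ingredient you set out to avoid, so it deserves an explicit citation rather than the label ``standard.''
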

\begin{proof}
By applying Sobolev's inequality when $n=2$, there exists a postive constant $c_1$ such that
\begin{align} \label{C52.LGN-1}
    \int_\Omega \phi^{m+1} \ln^\gamma (\phi+e) &\leq c_1 \left ( \int_\Omega  \left |\nabla \left ( \phi^{\frac{m+1}{2} }\ln^{\frac{\gamma}{2}} (\phi+e) \right ) \right | \right )^2+c_1 \left ( \int_\Omega \phi \ln^{\frac{\gamma}{m+1}}(\phi+e) \right )^{m+1} 
\end{align}
By using elementary inequalities, one can verify that 
\begin{align*}
  \left |\nabla \left ( \phi^{\frac{m+1}{2} }\ln^{\frac{\gamma}{2}} (\phi+e) \right ) \right | \leq c_2 \phi^{\frac{1}{2}} \ln^{\frac{\gamma}{2}}(\phi+e) |\nabla \phi^{\frac{m}{2}}|,
\end{align*}
where $c_2=C(m, \gamma)>0$. This, together with Holder's inequality leads to 
\begin{align} \label{C52.LGN-3}
    c_1 \left ( \int_\Omega  \left |\nabla \left ( \phi^{\frac{m+1}{2} }\ln^{\frac{\gamma}{2}} (\phi+e) \right ) \right | \right )^2 \leq c_3 \int_\Omega |\nabla \phi^{\frac{m}{2}}|^2 \cdot \int_\Omega \phi \ln^{\gamma}(\phi+e),
\end{align}
where $c_3=c_1c_2$. By Holder's inequality, we deduce that
\begin{align}\label{C52.LGN-4}
    c_1 \left ( \int_\Omega \phi \ln^{\frac{\gamma}{m+1}}(\phi+e) \right )^{m+1}  \leq  c_1 \left ( \int_\Omega  \phi \right )^m \left ( \int_\Omega \phi \ln^{\gamma}(\phi+e) \right ).
\end{align}
Collecting \eqref{C52.LGN-1}, \eqref{C52.LGN-3} and \eqref{C52.LGN-4} implies \eqref{C52.LGN.1}, which finishes the proof.
\end{proof}
As a consequence, we have the following lemma:
\begin{lemma}\label{C52.ILGN} Assume that $\Omega \subset \mathbb{R}^2$ is a bounded domain with smooth boundary  and $p>0$, $\gamma>\xi \geq 0$. For each $\epsilon>0$, there exists $C=C(\epsilon,\xi,\gamma)>0$ such that the following inequality holds
    \begin{align}\label{C52.ILGN.1}
        \int_\Omega \phi^ {m+1} \ln^{\xi}(\phi+e) \leq \epsilon \left ( \int_\Omega \phi \ln^{\gamma}(\phi+e) \right ) \left ( \int_\Omega |\nabla \phi^{\frac{m}{2}}|^2 \right  ) +\epsilon \left ( \int_\Omega  \phi \right )^m \left ( \int_\Omega \phi \ln^{\gamma}(\phi+e) \right ) +C,
    \end{align}
     for any positive function $\psi \in C^1 (\bar{\Omega})$.
\end{lemma}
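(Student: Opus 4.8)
The plan is to reduce the claim to the already-established Lemma \ref{C52.LGN} by trading the lower logarithmic exponent $\xi$ for the higher exponent $\gamma$ at the expense of an additive constant, and then absorbing the fixed constant coming from Lemma \ref{C52.LGN} into the prescribed small parameter $\epsilon$. (Here I read the parameter in the statement as $m$, matching the inequality \eqref{C52.ILGN.1}.)

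First I would invoke the elementary pointwise inequality \eqref{M}. Taking $a_1=a_2=m+1$, $b_1=\xi$, $b_2=\gamma$, and using the hypothesis $\xi<\gamma$, inequality \eqref{M} furnishes, for every $\epsilon'>0$, a constant $c(\epsilon')>0$ such that
\[
\phi^{m+1}\ln^{\xi}(\phi+e) \leq \epsilon'\,\phi^{m+1}\ln^{\gamma}(\phi+e) + c(\epsilon')
\]
holds pointwise in $\Omega$. Integrating over $\Omega$ gives
\[
\int_\Omega \phi^{m+1}\ln^{\xi}(\phi+e) \leq \epsilon'\int_\Omega \phi^{m+1}\ln^{\gamma}(\phi+e) + c(\epsilon')\,|\Omega|.
\]

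Next I would estimate the integral $\int_\Omega \phi^{m+1}\ln^{\gamma}(\phi+e)$ by applying Lemma \ref{C52.LGN}, which yields a constant $C_0=C_0(m,\gamma)>0$ with
\[
\int_\Omega \phi^{m+1}\ln^{\gamma}(\phi+e) \leq C_0\left(\int_\Omega \phi \ln^{\gamma}(\phi+e)\right)\left(\int_\Omega |\nabla \phi^{\frac{m}{2}}|^2\right) + C_0\left(\int_\Omega \phi\right)^m\left(\int_\Omega \phi \ln^{\gamma}(\phi+e)\right).
\]
Substituting this bound into the previous display reproduces exactly the two structural terms on the right-hand side of \eqref{C52.ILGN.1}, each now carrying the prefactor $\epsilon' C_0$, together with the remaining term $c(\epsilon')|\Omega|$.

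Finally, given a prescribed $\epsilon>0$, I would choose $\epsilon':=\epsilon/C_0$ so that $\epsilon' C_0=\epsilon$, and set $C:=c(\epsilon/C_0)\,|\Omega|$, a quantity depending only on $\epsilon,\xi,\gamma$ (and $m$). This delivers \eqref{C52.ILGN.1}. There is no genuine obstacle in this argument; the only point demanding care is the order of the constant bookkeeping: the smallness parameter $\epsilon'$ in \eqref{M} must be selected \emph{after} and \emph{in terms of} the fixed constant $C_0$ produced by Lemma \ref{C52.LGN}, so that the gradient term and the mass term inherit precisely the coefficient $\epsilon$ rather than an uncontrolled multiple of it.
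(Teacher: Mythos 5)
Your proposal is correct and follows essentially the same route as the paper: a pointwise absorption of $\ln^{\xi}$ into $\delta\,\ln^{\gamma}$ plus a constant (the paper states this directly, which is just the case $a_1=a_2$ of \eqref{M} that you invoke), followed by Lemma \ref{C52.LGN} and the choice $\delta=\epsilon/C_0$. The constant bookkeeping you emphasize is exactly what the paper does.
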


\begin{proof}
    Since $\gamma >\xi \geq 0$, one can verify that for any $\delta>0$, there exists $c_1=c(\delta, \xi, \gamma)>0$ such that for any $a \geq 0$ we have
    \begin{align}
        a^{m+1} \ln^\xi (a+e) \leq \delta a^{m+1} \ln^ \gamma (a+e)+c_1.
    \end{align}
    This entails that
    \begin{align}
        \int_\Omega \phi^ {m+1} \ln^{\xi}(\phi+e) \leq \delta \int_\Omega \phi^ {m+1} \ln^{\gamma}(\phi+e)+c_1|\Omega|.
    \end{align}
    Now for any fixed $\epsilon$, we choose $\delta = \frac{\epsilon}{C}$ where $C$ as in Lemma \ref{C52.LGN}, and apply \eqref{C52.LGN.1} to have the desire inequality \eqref{C52.ILGN.1}.
\end{proof}
The following step is to establish $L^p $ bounds for solutions with $p>1$.
\begin{lemma} \label{l3}
For any $p>1$ there exists $C=C(p)>0$ such that
    \begin{equation}
        \sup_{t \in (0,T_{\rm max})} \int_\Omega u^p (\cdot,t) \leq C
    \end{equation}
\end{lemma}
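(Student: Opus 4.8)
The plan is to test the first equation of \eqref{1} against $u^{p-1}$ and track the resulting dissipation and degradation. Integration by parts yields
\[ \frac{1}{p}\frac{d}{dt}\int_\Omega u^p = -\frac{4(p-1)}{p^2}\int_\Omega|\nabla u^{p/2}|^2 + (p-1)\int_\Omega u^{p-1}v^{-\alpha}\nabla u\cdot\nabla v + r\int_\Omega u^p - \mu\int_\Omega\frac{u^{p+1}}{\ln^\beta(u+e)}. \]
Writing $u^{p-1}\nabla u = \tfrac{2}{p}u^{p/2}\nabla u^{p/2}$ and applying Young's inequality to the cross term, I absorb half of the diffusion dissipation and reduce the problem to controlling the singular quantity $\int_\Omega u^p v^{-2\alpha}|\nabla v|^2$; explicitly I reach
\[ \frac{1}{p}\frac{d}{dt}\int_\Omega u^p + \frac{2(p-1)}{p^2}\int_\Omega|\nabla u^{p/2}|^2 + \mu\int_\Omega\frac{u^{p+1}}{\ln^\beta(u+e)} \le \frac{p-1}{2}\int_\Omega u^p v^{-2\alpha}|\nabla v|^2 + r\int_\Omega u^p. \]

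To estimate the singular term I reuse the gradient machinery behind the $L\ln L$ bounds. For $\alpha<\tfrac12$ I split $\int_\Omega u^p v^{-2\alpha}|\nabla v|^2 = \int_\Omega u^p(v^{-2}|\nabla v|^2)^\alpha|\nabla v|^{2(1-\alpha)}$ by Hölder's inequality as in \eqref{l1.4}, using $\int_\Omega v^{-2}|\nabla v|^2\le|\Omega|$ together with the elliptic $\nabla v$-estimate of \eqref{l1.5}; for $\alpha\in[\tfrac12,1)$ I use the iterated estimate \eqref{l2.3}. The outcome I aim for is
\[ \frac{p-1}{2}\int_\Omega u^p v^{-2\alpha}|\nabla v|^2 \le \frac{p-1}{p^2}\int_\Omega|\nabla u^{p/2}|^2 + C\int_\Omega u^{p+1} + C, \]
so that the gradient contribution is absorbed, leaving the critical power $\int_\Omega u^{p+1}$ to be dealt with.

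The key point is that $\int_\Omega u^{p+1}$ is handled by Lemma \ref{C52.ILGN}: applying \eqref{C52.ILGN.1} with $m=p$, $\xi=0$, $\gamma=1$ and invoking the uniform $L\ln L$ bound of Lemmas \ref{l1}--\ref{l2} (which also bounds $\int_\Omega u$), I get, for every $\epsilon>0$,
\[ \int_\Omega u^{p+1} \le \epsilon\Big(\int_\Omega u\ln(u+e)\Big)\int_\Omega|\nabla u^{p/2}|^2 + C(\epsilon). \]
Choosing $\epsilon$ small enough that this gradient term is absorbed into the last portion of the dissipation, all gradient terms disappear. Finally I apply \eqref{M} with $a_1=p<a_2=p+1$ to bound $(r+\tfrac1p)\int_\Omega u^p$ by half of the degradation term $\mu\int_\Omega u^{p+1}\ln^{-\beta}(u+e)$ plus a constant, which produces $\frac{d}{dt}\int_\Omega u^p + \int_\Omega u^p \le C$; Gronwall's inequality then gives the claimed uniform bound.

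The main obstacle is the second step. The reduction must land exactly on the power $\int_\Omega u^{p+1}$, because this is the only exponent the logarithmic Gagliardo--Nirenberg inequality of Lemma \ref{C52.ILGN} can convert into the available dissipation $\int_\Omega|\nabla u^{p/2}|^2$ using only the low-order $L\ln L$ bound. A literal transcription of \eqref{l1.4}--\eqref{l1.5} (which used Hölder exponent $2$ on the $u$-weight) instead produces $\int_\Omega u^{2p}$, and for $p>1$ this overshoots: it can be closed only against $\int_\Omega|\nabla u^{(2p-1)/2}|^2$, which is not controlled, or by a Gagliardo--Nirenberg step requiring $\int_\Omega u^p$ to be already bounded. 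Thus the real work is to rebalance the Hölder exponents and the accompanying elliptic estimate for $\nabla v$ so that the weight $u^p$ couples with exactly one extra power of $u$, separately handling $\alpha<\tfrac12$ and $\alpha\in[\tfrac12,1)$; the sub-logistic degradation alone cannot absorb $\int_\Omega u^{p+1}$ since $u^{p+1}\ln^{-\beta}(u+e)\le u^{p+1}$, so it is precisely the pairing of the dissipation with Lemma \ref{C52.ILGN} that closes the argument without any largeness condition on $\mu$.
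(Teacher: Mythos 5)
Your proposal follows essentially the same route as the paper's proof: testing against $u^{p-1}$, reducing via Young's inequality to the singular term $\int_\Omega u^p v^{-2\alpha}|\nabla v|^2$, bounding that term by a small multiple of $\int_\Omega |\nabla u^{p/2}|^2$ plus $C\int_\Omega u^{p+1}+C$, absorbing $\int_\Omega u^{p+1}$ through Lemma \ref{C52.ILGN} with $m=p$, $\gamma=1$, $\xi=0$ and the uniform $L\ln L$ bound, controlling $(r+\tfrac1p)\int_\Omega u^p$ via \eqref{M}, and closing with Gronwall. The ``rebalancing'' step you single out as the main obstacle is exactly the step the paper also does not carry out explicitly, deferring it to Lemmas 4.2--4.3 of the cited reference, so your account matches the paper's argument in both structure and level of detail.
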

\begin{proof}
    A direct calculation shows
    \begin{align}\label{l3.1}
        \frac{1}{p}\frac{d}{dt} \int_\Omega u^p &= \int_\Omega u^{p-1} \left ( \Delta u - \nabla \cdot (\frac{u}{v^\alpha} \nabla v) +ru -\frac{\mu u^2}{\ln^\beta (u+e)} \right ) \notag  \\
        &= - \frac{4(p-1)}{p^2} \int_\Omega |\nabla u^{\frac{p}{2}}|^2 + \frac{p-1}{p} \int_\Omega \frac{u^{\frac{p}{2}}}{v^\alpha} \nabla u^{\frac{p}{2}} \cdot \nabla v +r \int_\Omega u^p - \mu \int_\Omega \frac{u^{p+1}}{\ln^\beta (u+e)} .
    \end{align}
    By Young's inequality, we obtain
    \begin{align}\label{l3.2}
         \frac{p-1}{p} \int_\Omega \frac{u^{\frac{p}{2}}}{v^\alpha} \nabla u^{\frac{p}{2}} \cdot \nabla v \leq \frac{p-1}{p^2} \int_\Omega |\nabla u^{\frac{p}{2}}|^2+ c_1 \int_\Omega \frac{u^p}{v^{2\alpha}} |\nabla v|^2,
    \end{align}
    where $c_1=C(p)>0$.
    Following carefully the arguments in the proof of Lemma 4.2 and Lemma 4.3 in \cite{Zhao2023} with some necessary modifications yields that 
    \begin{align}\label{l3.3}
         c_1 \int_\Omega \frac{u^p}{v^{2\alpha}} |\nabla v|^2 \leq \frac{1}{2} \int_\Omega |\nabla u^{\frac{p}{2}}|^2 + c_2 \int_\Omega u^{p+1}+c_3,
    \end{align}
    where $c_2, c_3>0$. 
Since $\sup_{t\in (0,T_{\rm max) }}\int_\Omega u \ln (u+e) < \infty$, we can apply Lemma \ref{C52.ILGN} with $p=m$, $\gamma =1$, $\xi=0$, and
    \begin{equation*}
        \epsilon = \frac{p-1}{p^2 c_2\sup_{t\in (0,T_{\rm max) }}\int_\Omega u \ln (u+e)}
    \end{equation*}
    to deduce that
    \begin{align}\label{l3.4}
        c_2 \int_\Omega u^{p+1} &\leq \epsilon c_2 \int_\Omega |\nabla u^{\frac{p}{2}}|^2 \int_\Omega u\ln (u+e) + \epsilon c_2 \left ( \int_\Omega u \right )^m \int_\Omega u \ln (u+e) +c_4 \notag \\
        &\leq \frac{p-1}{p^2} \int_\Omega |\nabla u^{\frac{p}{2}}|^2  +c_5,
    \end{align}
    where $c_4,c_5>0$ depending on $\epsilon$. Now, one can verify by using elementary inequality that
    \begin{align}\label{l3.5}
        \left ( r+ \frac{1}{p} \right ) \int_\Omega u^p \leq \frac{\mu}{2} \int_\Omega \frac{u^{p+1}}{\ln^\beta (u+e)} +c_6,
    \end{align}
    where $c_6=C(\mu)>0$. Fron \eqref{l3.1} to \eqref{l3.5}, we deduce that 
    \begin{align}\label{l3.6}
        \frac{1}{p} \frac{d}{dt} \int_\Omega u^p + \frac{1}{p} \int_\Omega u^p \leq c_7
    \end{align}
    where $c_7= c_3+c_5+c_6$. Finally, the proof is completed by applying Gronwall's inequality to \eqref{l3.6}.
\end{proof}
We are now ready to prove the main result.
\begin{proof}[Proof of Theorem \ref{thm1}]
The proof follows precisely the same reasoning as in \cite{Zhao2023}, with the exception of Lemmas \ref{l3'} and \ref{l3}, which do not necessitate the largeness assumption on $\mu$ as in Lemmas 4.2 and 4.4 in \cite{Zhao2023}.
\end{proof}
\section*{Acknowledgments}
The author acknowledges support from the Mathematics Graduate Research Award Fellowship at Michigan State University.
\printbibliography

\end{document}